\colorlet{shadecolor}{gray!20} 
\definecolor{mygreen}{rgb}{0,0.6,0}
\newtheorem{remark}[theorem]{Remark}
\def\e{\mathrm{e}}
\def\e{{\epsilon}}
\def\tr{\mathrm{tr}}
\def\Rbnk{{\mathbb{R}^{2n\times 2k}}}
\def\Rbnn{{\mathbb{R}^{2n\times 2n}}}
\def\Spkn{{\mathrm{Sp}(2k,2n)}}
\def\Spn{{\mathrm{Sp}(2n)}}
\def\SPSD{\mathcal{SPSD}}
\def\diag{\mathrm{diag}}
\def\calJ{\mathcal{J}}
\def\calA{\mathcal{A}}
\def\calX{\mathcal{X}}
\def\e{\mathrm{e}}
\def\R{\mathbb{R}}
\def\spn{\mathrm{span}}
\title{Symplectic eigenvalues of positive-semidefinite matrices and the trace minimization theorem%\thanks{This work was supported by the Fonds de la Recherche Scientifique – FNRS and the Fonds Wetenschappelijk Onderzoek – Vlaanderen under EOS Project no. 30468160. It was finished during a visit of the first author to Vietnam Institute for Advanced Study in Mathematics (VIASM) whose support was gratefully acknowledged.}
}
\author{Nguyen Thanh Son\thanks{Institute of Mathematics $\&$ Centre for Advanced Analytics and Predictive Sciences (CAAPS), University of Augsburg, Universit\"{a}tsstra\ss e 12a, 86159 Augsburg, Germany and Thai Nguyen University of Sciences, 24118 Thai Nguyen, Vietnam (ntson@tnus.edu.vn).}
%	\and 
%	P.-A. Absil\thanks{ICTEAM Institute, UCLouvain, 1348 Louvain-la-Neuve, Belgium (pa.absil@uclouvain.be, gaobin@lsec.cc.ac.cn).}
%	\and Bin Gao\footnotemark[3]
	\and Tatjana Stykel\thanks{Institute of Mathematics $\&$ Centre for Advanced Analytics and Predictive Sciences (CAAPS), University of Augsburg, Universit\"{a}tsstra\ss e 12a, 86159 Augsburg,  Germany (tatjana.stykel@math.uni-augsburg.de).}	
}
\begin{document}
\maketitle

%\footnotetext{\today \hspace*{0.3cm}   \currenttime}
\begin{abstract} 	
Symplectic eigenvalues are conventionally defined for 
symmetric positive-definite matrices via Williamson's diagonal form. Many properties of standard eigenvalues, including the trace minimization theorem, are extended to the case of symplectic eigenvalues. In this note, we will ge\-ne\-ralize Williamson's diagonal form for symmetric positive-definite matrices to the case of symmetric positive-semidefinite matrices, which allows us to define symplectic eigenvalues, and prove the trace minimization theorem in the new setting.
\end{abstract}

\begin{keywords} 
Symplectic eigenvalues, positive-semidefinite matrices,  trace minimization.
\end{keywords}

\begin{AMS}
15A15, 15A18, 70G45 
\end{AMS}

\pagestyle{myheadings}
\thispagestyle{plain}
\markboth{N. T. SON AND T. STYKEL}{SYMPLECTIC EIGENVALUES OF SPSD MATRICES AND TRACE MINIMIZATION}

\section{Introduction}
Let us first recall that a matrix $S \in \Rbnn$ is referred to as a \emph{symplectic matrix} if it satisfies the relation
$$
S^TJ_{2n}S = J_{2n}\quad \mbox{with}\quad J_{2n} = \begin{bmatrix}
0&I_n\\-I_n&0
\end{bmatrix},
$$
where $I_n$ denotes the $n\times n$ identity matrix. The set of such matrices forms a multiplicative group \cite{Fiori2011} and is denoted by $\Spn$. We also employ the set of rectangular symplectic matrices defined as
$$
\Spkn = \{S\in \R^{2n\times 2k}\,:\, S^TJ_{2n}S = J_{2k}\}
$$
for some $k$ with $1\leq k \leq n$. This set was shown to be an embedded Riemannian manifold of $\R^{2n\times 2k}$ \cite[Proposition 3.1]{GaoSAS2021}.

Derived from Williamson's work \cite{Will36}, for a $2n\times 2n$ symmetric positive-definite (spd) real matrix $A$, there exists a matrix $S\in \Spn$ such that
\begin{equation}\label{eq:sympleigdecomp}
	S^TAS = \begin{bmatrix}
		D & 0 \\ 0 & D
	\end{bmatrix},
\end{equation}
where $D=\diag(d_1,\ldots,d_n)$ is a diagonal matrix with positive diagonal entries. The equality \eqref{eq:sympleigdecomp} looks very like the eigenvalue decomposition of spd matrices except for the fact that the transformation matrix $S$ is symplectic instead of orthogonal. The right-hand side of \eqref{eq:sympleigdecomp} is termed as \emph{Williamson's diagonal form} of $A$. The positive numbers $d_1,\ldots,d_n$ are referred to as \emph{symplectic eigenvalues}. They form  the \emph{symplectic spectrum} of $A$. The symplectic spectrum of an spd matrix is unique while the so-called \emph{diago\-na\-lizing matrix} $S$ in \eqref{eq:sympleigdecomp} is not unique. The set of diagonalizing matrices for a given spd matrix was characterized via %, to some extent, 
a symplectic version of Autonne's uniqueness theorem in \cite[Theorem 3.5]{SonAGS2021}. %

Symplectic eigenvalues find applications in quantum mechanics, optics, and sta\-bi\-lity analysis of structured mechanical systems \cite{Hiro06,BennFS08,KrbeTV14}. Symplectic eigenvalues and eigenvectors, defined below,  can be numerically computed %especially for large-scale matrices, 
using a symplectic Lanczos method via the connection with  so-called positive-definite Hamiltonian matrices \cite{Amod06} or by solving a trace minimization problem using a Riemannian optimization method \cite{GaoSAS2021,SonAGS2021}. 

Many properties of symplectic eigenvalues for spd matrices have been investigated. Besides trace minimization mentioned above, these are Cauchy's interlacing theorem \cite{BhatJ15}, variational principles and Weyl inequality \cite{BhatJ20}, {S}chur-{H}orn theorem \cite{BhatJ20_2}, {S}zeg\H{o}-type theorem \cite{BhatJ21}, and Lidskii's (or the majorization) theorem \cite{JainM22}, just to name a few. 

It is noteworthy that in the original work \cite{Will36}, $A$ is not necessarily spd but solely symmetric. It would be a reasonable question to ask if it is still possible to define symplectic eigenvalues and investigate their properties for a more general case in which $A$ is symmetric positive-semidefinite (spsd). It turns out that not every spsd matrix is appropriate for this purpose and even in the case that symplectic eigenvalues are definable, %we can not always generalize the definition for spsd matrices and 
some properties that are already known for symplectic eigenvalues of spd matrices are nontrivially extendable to the new setting. In this note, we will set the condition under which an spsd matrix enjoys Williamson's diagonal form which enables the definition of symplectic eigenvalues. Moreover, we also prove the trace minimization theorem for symplectic eigenvalues of such matrices because it can not only be exploited to compute symplectic eigenpairs \cite{SonAGS2021} but it also provides a direct way to derive the majorization inequality similar to \cite[Theorem 1]{Hiro06}.

For ease of presentation, we now collect some necessary materials following \cite{deGo06,JainM22,SonAGS2021}. The Kronecker delta function will be denoted and understood as 
$$\delta_{ij} = \begin{cases}
1,\ \mbox{ if } i = j,\\
0,\ \mbox{ if } i \not= j.
\end{cases}
$$
%In the vector space $\R^{2n}$, $\langle \cdot,\cdot\rangle$ denotes the standard Euclidean inner product. 
The superscript $\ \cdot^T$ applied to a matrix means its transpose. The set of all $2n\times 2n$ spsd real matrices will be denoted by  $\SPSD(2n)$. Given a square matrix $M$, $\tr (M)$ denotes the trace of $M$; the \emph{kernel} or the \emph{null space} of $M$ is the set $\ker M = \{x:Mx = 0\}$; a~\emph{square root} of $M$ is a~square matrix $K$ of the same size such that $K^2 = M$. Furthermore, $\spn\{v_1,\ldots,v_\ell\}$ stands for the subspace %\chg{of $\R^{2n}$} 
spanned by the vectors $v_1,\ldots,v_\ell$; applied to a matrix, it means the subspace spanned by the columns of this matrix. Given $A_j~\in~\R^{n_j\times n_j}, j = 1,\ldots,\ell,$ $\diag(A_1,\ldots,A_\ell)$ denotes the (block) diagonal matrix with $A_j$  on the diagonal. A subset $U \subset \R^{2n}$ is said to be \emph{symplectically orthogonal to} a subset $V \subset \R^{2n}$ if %$\langle u,J_{2n}v\rangle \equiv 
$u^TJ_{2n}v = 0$ for all $u\in U, v \in V$. %A pair of vectors $(u,v)$ is said to be  \emph{symplectically normalized} if $\langle u,J_{2n}v\rangle = 1$. 
Two pairs of vectors $(u_1,v_1)$ and $(u_2,v_2)$ in $\R^{2n}\times \R^{2n}$ are said to be \emph{symplectically normalized}\footnote{It is slightly different from the definition in \cite{SonAGS2021}.} if 
\begin{equation*}
%\langle u_i,J_{2n}v_j\rangle = \delta_{ij}\ \mbox{ and } 
%\langle u_i,J_{2n}u_j\rangle = \langle v_i,J_{2n}v_j\rangle = 0 \  \mbox{ for }\ i,j = 1,2.
u_i^TJ_{2n}v^{}_j = \delta_{ij}\quad \mbox{and}\quad  u_i^TJ_{2n}u^{}_j = v_i^TJ_{2n}v^{}_j = 0 \quad  \mbox{for}\quad i,j = 1,2.
\end{equation*}
%The ordered set of vectors $X = [u_1,\ldots,u_k,v_1,\ldots,v_k]$ is said to be symplectically orthogonal if the vector pairs $(u_j,v_j),j=1,\ldots,k,$ are mutually symplectically orthogonal, \chg{and symplectically} normalized if each pair 
%$(u_j,v_j),j=1,\ldots,k,$ is \chg{symplectically} normalized. 
The symplecticity of the matrix $[x_1,\ \ldots,\ x_{2k}]$ is equivalent to the fact that each two pairs $(x_i,x_{k+i})$ and $(x_j,x_{k+j})$  are symplectically normalized for $i,j = 1,\ldots,k$ and $i~\not=~j.$ %matrix with $1\leq k\leq n,$  its symplecticity is equivalent to the fact that it is both symplectically orthogonal and normalized. 
It was shown in \cite[Theorem 1.15]{deGo06} that a symplectically normalized set of vectors are linear independent. Therefore, when $k=n$, such vectors form a basis of $\R^{2n}$. In that case, it is called a \emph{symplectic basis}. A subspace $\mathbb{U}\subset \R^{2n}$ is called a \emph{symplectic subspace} if for any $x\in \mathbb{U}\backslash \{0\}$, there exists $y\in \mathbb{U}$ such that $x^TJ_{2n}y = 1$. It turns out that we can always construct a  symplectic basis for a symplectic subspace \cite[Theorem 1.15]{deGo06}. Conversely, a~given subspace is symplectic if it owns a symplectic basis as shown in Lemma~\ref{lem:symplecticbasis} below. Two nonzero vectors written as a matrix $[u,v]\in \R^{2n\times 2}$ form a~\emph{symplectic eigenvector pair} corresponding to the \emph{symplectic eigenvalue} $d$ of $A\in \SPSD(2n)$ if it holds
\begin{equation}\label{Eq:Def_symplEigenVec}
Au = dJ_{2n}v,\quad Av = -dJ_{2n}u.
\end{equation}
The set $([u,v],d)$ is then generally referred to as a \emph{symplectic eigenpair}. The existence of Williamson's diagonal form for any given $2n\times 2n$ spd matrix tells us that there exists a~symplectic basis for $\R^{2n}$ consisting of its symplectic eigenvectors.

\section{Williamson's diagonal form of symmetric positive-semidefinite matrices}
%Suppose $A\in\Rbnn$ is symmetric negative-definite. 
%One can apply \eqref{eq:sympleigdecomp} to $-A$ to obtain
%\begin{equation}\label{eq:sympleigdecomp_2}
%	S^T(-A)S = \begin{bmatrix}
%		D & 0 \\ 0 & D
%	\end{bmatrix},
%\end{equation}
%or equivalently, one can write
%\begin{equation}\label{eq:sympleigdecomp_3}
%	S^TAS = \begin{bmatrix}
%		-D & 0 \\ 0 & -D
%	\end{bmatrix}.
%\end{equation}
%We will refer to \eqref{eq:sympleigdecomp_3} as Williamson's diagonal form of negative-definite matrix $A$ and negative entries on $D$ are also called the symplectic eigenvalues of $A$.
In this section, we will extend the definition of symplectic eigenvalues to the case of spsd matrices by explicitly constructing Williamson's diagonal forms. For further discussion, we will need the following technical property.
\begin{lemma}\label{lem:symplecticbasis}
	If a subspace $\mathbb{W}\subset \R^{2n}$ owns a symplectic basis, then it is a symplectic subspace.
\end{lemma}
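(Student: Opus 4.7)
The plan is to work directly from the two definitions, namely the definition of a symplectic basis (a spanning set of vector pairs that are symplectically normalized) and the definition of a symplectic subspace (for every nonzero $x$ there exists $y$ in the subspace with $x^TJ_{2n}y=1$). Since symplectically normalized vectors are linearly independent by \cite[Theorem 1.15]{deGo06}, a symplectic basis of $\mathbb{W}$ has the form $\{u_1,\ldots,u_k,v_1,\ldots,v_k\}$ with $\dim\mathbb{W}=2k$, and the basis vectors satisfy
$$
u_i^TJ_{2n}v^{}_j=\delta_{ij},\qquad u_i^TJ_{2n}u^{}_j=v_i^TJ_{2n}v^{}_j=0,\qquad i,j=1,\ldots,k.
$$

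Given an arbitrary nonzero $x\in\mathbb{W}$, I would expand it in this basis as $x=\sum_{i=1}^k\alpha_iu_i+\sum_{i=1}^k\beta_iv_i$ with at least one coefficient nonzero, and then exhibit an explicit $y\in\mathbb{W}$ that pairs nontrivially with $x$. Concretely, if some $\alpha_{i_0}\neq 0$, I would choose $y=\alpha_{i_0}^{-1}v_{i_0}\in\mathbb{W}$; otherwise some $\beta_{j_0}\neq 0$ and I would choose $y=-\beta_{j_0}^{-1}u_{j_0}\in\mathbb{W}$. Using the normalization identities together with $v_i^TJ_{2n}u^{}_j=-u_j^TJ_{2n}v^{}_i=-\delta_{ji}$ (because $J_{2n}^T=-J_{2n}$), a short computation shows that in either case $x^TJ_{2n}y=1$, which is precisely what is required for $\mathbb{W}$ to be symplectic.

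There is no real obstacle: the proof is essentially a direct bilinear computation once the symplectic basis has been unfolded into coordinates. The only things to keep track of are the sign coming from $J_{2n}^T=-J_{2n}$ and the case distinction according to whether the nonzero coefficient appears among the $\alpha_i$ or the $\beta_i$. This also makes the lemma a genuine converse to the existence statement quoted from \cite[Theorem 1.15]{deGo06}, completing the characterization of symplectic subspaces as exactly those subspaces that admit a symplectic basis.
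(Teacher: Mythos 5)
Your proof is correct and follows essentially the same approach as the paper: expand $x$ in the symplectic basis and exhibit an explicit $y\in\mathbb{W}$ with $x^TJ_{2n}y=1$. The only cosmetic difference is that the paper avoids your case distinction by using the single uniform choice $y=\bigl(\sum_j\lambda_j^2\bigr)^{-1}\sum_{j}(\lambda_jw_{m+j}-\lambda_{m+j}w_j)$, where the $\lambda_j$ are the coordinates of $x$.
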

\begin{proof}
Assume that the columns of $[w_1,\ldots,w_m,w_{m+1},\ldots,w_{2m}]$  with $1\leq m \leq n$	form a symplectic basis of $\mathbb{W}$. Then any $x \in \mathbb{W}\backslash\{0\}$ can be represented via this basis as $x = \sum_{j=1}^{m}(\lambda_jw_j+\lambda_{m+j}w_{m+j})$  with $\sum_{j=1}^{2m}\lambda_j^2\not= 0$. Setting 
$$y = \frac{1}{\sum_{j=1}^{2m}\lambda_j^2}\sum_{j=1}^{m}(\lambda_jw_{m+j}-\lambda_{m+j}w_j),$$
	%$y = (\lambda_1w_{m+1} +\cdots+\lambda_mw_{2m} - \lambda_{m+1}w_{1} -\cdots-\lambda_{2m}w_{m})/(\lambda_1^2 +\cdots + \lambda_{2m}^2)$. 
we can verify that $x^TJ_{2n}y = 1$. This implies that the subspace $\mathbb{W}$ is symplectic.
\end{proof}
For a moment, we consider  an spd matrix $A$ which has Williamson's diagonal form~\eqref{eq:sympleigdecomp}. Multiplying \eqref{eq:sympleigdecomp} with $S^{-T} = J^{}_{2n}SJ_{2n}^T$ from the left and with $S^{-1} = J_{2n}^TS^TJ^{}_{2n}$ from the right, we derive
\begin{equation*}%\label{Eq:sympleigdecomp_modified1}
A = J_{2n}S\begin{bmatrix}
		D & 0 \\ 0 & D
	\end{bmatrix} (J_{2n}S)^T.
\end{equation*}
Then, for $0\leq m < n$, we set
\begin{equation*}%\label{Eq:sympleigdecomp_modified2}
\tilde{A} = J_{2n}S\begin{bmatrix}
		\tilde{D}_{m+1:n} & 0 \\ 0 & \tilde{D}_{m+1:n}
	\end{bmatrix} (J_{2n}S)^T,
\end{equation*}
where $\tilde{D}_{m+1:n} = \diag(0,\ldots,0,d_{m+1},\ldots,d_n)  \in \R^{n\times n}$. We reverse the above multiplications to obtain
\begin{equation*}%\label{Eq:sympleigdecomp_modified4}
\tilde{A}S = J_{2n}S\begin{bmatrix}
0&-\tilde{D}_{m+1:n} \\\tilde{D}_{m+1:n} &0
\end{bmatrix}
\end{equation*}
and 
\begin{equation}\label{eq:sympleigdecomp_modified3}
	S^T\tilde{A}S = \begin{bmatrix}
		\tilde{D}_{m+1:n} & 0 \\ 0 & \tilde{D}_{m+1:n}
	\end{bmatrix}.
\end{equation}
Several facts can be drawn from the above construction. First, $\tilde{A}$ is an spsd matrix of rank $2n-2m$. Second, %from \eqref{Eq:sympleigdecomp_modified4}, 
if we write $S$ by its columns as $S = [s_1,\ldots,s_{2n}]$, then the kernel of $\tilde{A}$ is $\spn\{s_1,\ldots,s_m,s_{n+1},\ldots,s_{n+m}\}$. Moreover, it is a symplectic subspace of dimension $2m$ due to Lemma~\ref{lem:symplecticbasis}.
And third, it is symplectically orthogonal to the subspace $\spn\{s_{m+1},\ldots,s_n,s_{n+m+1},\ldots,s_{2n}\}$ which is nothing else but the range of $\tilde{A}$.

%Obviously, \eqref{eq:sympleigdecomp_modified3} is like \eqref{eq:sympleigdecomp} and therefore can be thought of as Williamson's diagonal form of the spsd matrix $\tilde{A}$. %Before proceeding further, we will need the following technical lemma about the kernel of the spsd square root of an spsd matrix. 
%It is worth to recall that any spsd matrix has a unique spsd square root, \chg{denoted by $A^{1/2}$, and that $\ker A^{1/2} = \ker A$} \cite[Theorem 7.2.6]{HornJohnson2013}.
%\begin{lemma}\label{lem:kernel}
%Let $A$ be an spsd matrix and $A^{1/2}$ be the unique spsd square root of $A$. Then $\ker A^{1/2} = \ker A$.
%\end{lemma}
%\begin{proof}
%Because of the uniqueness, we can explicitly construct the square root as $A^{1/2} = V^T\Lambda^{1/2}V$ where $A = V^T\Lambda V$ is an eigenvalue decomposition of $A$ with orthogonal matrix $V$ and $\Lambda^{1/2}$ is the diagonal matrix whose diagonal elements are the nonnegative square roots of that of $\Lambda$. In these representations, both $\ker A$ and $ \ker A^{1/2}$ are the subspace spanned by the columns of $V$ associated with zeros on the diagonal of $\Lambda$.
%\end{proof}

Obviously, the decomposition \eqref{eq:sympleigdecomp_modified3} is like \eqref{eq:sympleigdecomp} and therefore it can be thought of as Williamson's diagonal form of the spsd matrix $\tilde{A}$.
This gives us a clear vision of how to construct Williamson's diagonal form of spsd matrices and under which condition this is possible. In practice, we will proceed somewhat in the reverse direction.  Indeed, given $A\in \SPSD(2n)$ whose kernel is a symplectic space. As any symplectic subspace must have even dimension \cite[Sect. 1.1]{deGo06}, we assume that $\dim(\ker A) = 2m$ with $0 \leq m < n$. Let $W = [W_1\ W_2]$ with $W_1,\ W_2 \in \R^{2n\times m}$ be a symplectic matrix whose columns form a symplectic basis of $\ker A$. Note that such a basis can be computed using the symplectic Gram-Schmidt process \cite[Theorem.~ 1.15]{deGo06}. We adapt the procedure proposed in  \cite{Part13} for constructing Williamson's diagonal form of spd matrices to the more general case where the matrix $A$ is spsd 
as follows. 

First, it is worth to recall that any spsd matrix has a unique spsd square root, denoted by $A^{1/2}$, and that $\ker A^{1/2} = \ker A$ \cite[Theorem 7.2.6]{HornJohnson2013}. Next, we will show that $\ker(A^{1/2}J_{2n}A^{1/2}) = \ker A^{1/2}$. It is trivial that $\ker(A^{1/2}J_{2n}A^{1/2}) \supseteq \ker A^{1/2}$. Now, consider any $y\in \R^{2n}$ such that $A^{1/2}J_{2n}A^{1/2}y = 0$. Assume that $J_{2n}A^{1/2}y\not=0$. Because $\ker A^{1/2}$ is a symplectic subspace, there exists $z\in \ker A^{1/2}$ such that $z^TJ_{2n}J_{2n}A^{1/2}y = 1$ which leads to $y^TA^{1/2}z = -1$. This contradicts to the fact that $z\in \ker A^{1/2}$ and hence $J_{2n}A^{1/2}y=0$. As $J_{2n}$ is nonsingular, we deduce that $A^{1/2}y=0$. This implies the desired equality.  %as $J_{2n}$ is nonsingular. Otherwise, because $\ker A^{1/2}$ is a symplectic subspace, there must be $z\in \ker A^{1/2}$ such that $z^TJ_{2n}J_{2n}A^{1/2}y = 1$ which leads to $y^TA^{1/2}z = -1$. This contradicts to the fact that $z\in \ker A^{1/2}$. The confirmation therefore holds.
Observe moreover that the matrix $A^{1/2}J_{2n}A^{1/2}$ is skew-symmetric. Then, this matrix can be transformed into the real Schur form
\begin{equation}\label{eq:Schur}
	Q^TA^{1/2}J_{2n}A^{1/2}Q = \diag\left(0_{2\times 2},\ldots,0_{2 \times 2},\begin{bmatrix}
		0&d_{m+1}\\-d_{m+1}&0
	\end{bmatrix},\ldots, \begin{bmatrix}
		0&d_{n}\\-d_{n}&0
	\end{bmatrix}\right),
\end{equation}
where $0 < d_{m+1} \leq\cdots\leq d_{n}$ and $Q$ is a real $2n\times 2n$ orthogonal matrix, see, e.g., \cite[Theorem~7.4.1]{GoluV13}. 
After that, we multiply both sides of \eqref{eq:Schur} with the permutation matrix $P~=~[\e_1,\e_3,\ldots,\e_{2n-1},\e_2,\e_4,\ldots,\e_{2n}]$ from the right and with $P^T$ from the left, where
%\begin{equation*}
%	P = [\e_1,\e_3,\ldots,\e_{2n-1},\e_2,\e_4,\ldots,\e_{2n}],
%	\label{eq:matrP}
%\end{equation*}
%in which
$\e_j$ denotes the $j$-th canonical unit vector in $\R^{2n}$. %, j = 1,\ldots,2n$. 
As a result, we obtain
$$
P^TQ^TA^{1/2}J_{2n}A^{1/2}QP = \begin{bmatrix}
	0&D\\-D& 0
\end{bmatrix},
$$
where $D = \diag(0, \ldots,0,d_{m+1},\ldots,d_{n})$. Finally, setting 
%\begin{bmatrix}
%	0_{m\times (n-m)}&0_{m\times (n-m)}\\
%	0_{(n-m)\times (n-m)}&-\tilde{D}_{m+1:n}^{-1/2}\\
%	0_{m\times (n-m)}&0_{m\times (n-m)}\\
%	\tilde{D}_{m+1:n}^{-1/2}& 0_{(n-m)\times (n-m)}
%\end{bmatrix}
\begin{equation}\label{eq:DiagonalizingMat}
	\tilde{S} = J_{2n}A^{1/2} QP\left[\begin{array}{ll}
		0_{m\times (n-m)}&0_{m\times (n-m)}\\
		0_{(n-m)\times (n-m)}&-\tilde{D}_{m+1:n}^{-1/2}\\
		0_{m\times (n-m)}&0_{m\times (n-m)}\\
		\tilde{D}_{m+1:n}^{-1/2}& 0_{(n-m)\times (n-m)}
	\end{array}\right] =: [\tilde{S}_1\ \tilde{S}_2] \in \R^{2n\times(2n-2m)},
\end{equation}
where $\tilde{D}^{-1/2}_{m+1:n} = \diag(1/\sqrt{d_{m+1}},\ldots,1/\sqrt{d_{n}})$ and $\tilde{S}_1,\tilde{S}_2 \in \R^{2n\times (n-m)}$, %Obviously, columns of $\tilde{S}$ with indices $1,\ldots,m, n+1,\ldots,n+m$ are zeros. 
%The sought diagonalizing matrix $S$ for $A$ is derived 
we define the matrix $S = [W_1\  \tilde{S}_1\ W_2\   \tilde{S}_2]$. %from $\tilde{S}$ by replace these columns by that of $W$ in the natural order. 
The following proposition shows that $S$ is a symplectic matrix which diagonalizes $A$. % dividing $W$ and $\tilde{S}$ into two equal halves: $W = [W_1\ W_2]$ and $\tilde{S}=[\tilde{S}_1\ \tilde{S}_2]$, repspectively, $S$ is formulated by interlacing $W$ and $\tilde{S}$ as $S = [W_1\  \tilde{S}_1\ W_2\   \tilde{S}_2]$. 
\begin{proposition}\label{prop:WillFormSemidefiniteMat}
	Given a matrix $A\in \SPSD(2n)$ %of rank $2n-2m, 0 \leq m \leq n$, 
	whose kernel is a symplectic subspace of $\R^{2n}$ of dimension $2m$. Then the matrix $S$ constructed as above is symplectic and it holds
	\begin{equation}\label{eq:sympleigdecomp_4}
		S^TAS = \begin{bmatrix}
			D & 0 \\ 0 & D
		\end{bmatrix},
	\end{equation}
where $D = \diag(d_1, \ldots,d_{n})$ with $0=d_1 =\cdots = d_m < d_{m+1}\leq \cdots \leq d_{n}$. 
\end{proposition}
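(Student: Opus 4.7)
The plan is to verify the two claimed identities directly by computing the block Gram matrices $S^T J_{2n} S$ and $S^T A S$ in terms of the four-block partition $S = [W_1,\ \tilde{S}_1,\ W_2,\ \tilde{S}_2]$. Let me write $\tilde{S} = J_{2n} A^{1/2} Q P N$, where $N \in \R^{2n\times 2(n-m)}$ is the explicit block matrix in the construction of $\tilde{S}$, and set $B = A^{1/2} J_{2n} A^{1/2}$ for brevity. The key algebraic ingredients are the identity $J_{2n}^2 = -I_{2n}$, the Schur reduction $P^T Q^T B Q P = \left[\begin{smallmatrix} 0 & D \\ -D & 0 \end{smallmatrix}\right]$, and the facts $\ker A^{1/2} = \ker A$ and $A W_i = 0$ already established before the proposition.

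For symplecticity, the $W$-$W$ block of $S^T J_{2n} S$ equals $J_{2m}$ by the hypothesis that $W$ is symplectic. The cross blocks $W_i^T J_{2n} \tilde{S}_j$ vanish: pushing $J_{2n}$ through $\tilde{S}_j = J_{2n} A^{1/2}(\cdots)$ and using $J_{2n}^2 = -I_{2n}$ reduces them to $-(A^{1/2} W_i)^T (\cdots) = 0$. For the $\tilde{S}$-$\tilde{S}$ block, the same manipulation gives
$$
\tilde{S}^T J_{2n} \tilde{S} = N^T (P^T Q^T B Q P) N = N^T \begin{bmatrix} 0 & D \\ -D & 0 \end{bmatrix} N,
$$
and a direct product using the block structure of $N$ together with $D = \diag(0_{m\times m},\ \diag(d_{m+1},\ldots,d_n))$ yields $J_{2(n-m)}$. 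Reassembling the four blocks in the interleaved order dictated by $S$ produces $J_{2n}$.

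For the diagonalization, the $W$-$W$ and $W$-$\tilde{S}$ blocks of $S^T A S$ vanish because $A W_1 = A W_2 = 0$. For the $\tilde{S}$-$\tilde{S}$ block, the factorization $A = A^{1/2} A^{1/2}$ together with $A^{1/2} J_{2n}^T A^{1/2} = -B$ gives $A^{1/2} J_{2n}^T A J_{2n} A^{1/2} = -B^2$, and by the Schur reduction
$$
P^T Q^T B^2 Q P = (P^T Q^T B Q P)^2 = -\begin{bmatrix} D^2 & 0 \\ 0 & D^2 \end{bmatrix}.
$$
Hence $\tilde{S}^T A \tilde{S} = N^T \diag(D^2,D^2) N$, and expanding with the explicit $N$ (whose nontrivial blocks are $\pm \tilde{D}_{m+1:n}^{-1/2}$, so that $\tilde{D}_{m+1:n}^{-1/2} D^2 \tilde{D}_{m+1:n}^{-1/2} = \diag(d_{m+1},\ldots,d_n) =: E$) produces $\diag(E,E)$. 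Reordering the four blocks inserts the $m\times m$ zero blocks coming from the $W$-columns and assembles $S^T A S = \diag(D, D)$ with $D$ as stated.

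The main obstacle is not any single deep step — the skew-symmetric Schur form, the symplecticity of $\ker A$, and the kernel identifications are already in hand — but rather the careful bookkeeping required to track the interleaved block structure: ensuring that the permutation $P$, the column partition $N = [N_1\ N_2]$ giving $\tilde{S}_1,\ \tilde{S}_2$, and the four-block ordering of $S$ combine coherently so that the cross terms cancel and the nontrivial diagonal contributions land exactly in the positions indexed by $d_{m+1},\ldots,d_n$, with zeros in the first $m$ positions of each diagonal block of $\diag(D,D)$.
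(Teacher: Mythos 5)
Your proposal is correct and follows essentially the same route as the paper: exploit $A^{1/2}W=0$ to kill the cross blocks $W_i^TJ_{2n}\tilde S_j$ and $W_i^TA\tilde S_j$, verify $\tilde S^TJ_{2n}\tilde S=J_{2(n-m)}$ and $\tilde S_i^TA\tilde S_j=\delta_{ij}\diag(d_{m+1},\ldots,d_n)$ from the Schur form of $A^{1/2}J_{2n}A^{1/2}$, and reassemble the interleaved blocks. The paper merely states these identities without writing out the block computations, which you carry out explicitly (and correctly, including the signs in $-B^2$ and the cancellation $\tilde D_{m+1:n}^{-1/2}E^2\tilde D_{m+1:n}^{-1/2}=E$).
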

\begin{proof}
It follows from $\ker A^{1/2}=\ker A = \spn(W)$ that $A^{1/2}W = 0$. Thus, in view of \eqref{eq:DiagonalizingMat}, $W^TJ_{2n}\tilde{S} =0$. Furthermore, taking into account the fact that the matrices $W$ and $\tilde{S}$ are both symplectic, we deduce that $S$ is symplectic.
	%By construction, % we can write $S = [W_1\  \tilde{S}_1\ W_2\   \tilde{S}_2]$, where $\tilde{S}_1$ and $\tilde{S}_2$ are the matrices of columns of $\tilde{S}$ with indices from $m+1$ to $n$, and from $n+1$ to $n+m$, respectively and $W_1, W_2$ are the first and second equal halves of $W$ devided vertically. 
%the symplecticity of $S$ is deduced using the fact that the \chg{matrices $W$ and $\tilde{S}$ are both} symplectic and that it is due to Lemma~\ref{lem:kernel} symplectically orthogonal to both $\tilde{S}_1$ and  $\tilde{S}_2$.
	
The diagonal form \eqref{eq:sympleigdecomp_4} follows directly from the equalities $AW_j = 0$,  $W_j^TA = 0$, and $\tilde{S}^T_iA\tilde{S}_j = \delta_{ij}\diag(d_{m+1},\ldots,d_n)$ for $i,j = 1, 2$. %, and $\tilde{S}^T_1A\tilde{S}^{}_2 = \tilde{S}^T_2A\tilde{S}^{}_1 = 0. $
\end{proof}
%\begin{remark}
In view of the connection of $d_j, j=1,\ldots,n,$ determined in \eqref{eq:sympleigdecomp_4} with the standard eigenvalues of Hamiltonian matrix $J_{2n}A$, see \cite[Proposition 3.2]{SonAGS2021} and references therein, we can conclude that $d_j, j=1,\ldots,n,$ are unique. This fact gives rise to the following definition.
\begin{definition}
For $A\in \SPSD(2n)$ with a symplectic kernel, %\marginpar{unnecessary to menion $m$ here}%\chg{of dimension $2m$,} 
the right-hand side of \eqref{eq:sympleigdecomp_4} is called \emph{Williamson's diagonal form} of $A$. The nonnegative diagonal elements in this form $0 \leq d_1 \leq \cdots  \leq d_n$ are called the \emph{symplectic eigenvalues} of $A$.
\end{definition}
	%By construction during the proof \chg{and Lemma~\ref{lem:symplecticbasis}}, we can easily see that 
	It immediately follows from \eqref{eq:sympleigdecomp_4} that the symplecticity of the kernel of $A$ is also a necessary condition for the existence of Williamson's diagonal form of $A$. The spsd matrices that do not satisfy this condition are not difficult to find. Consider, for instance, $A = \diag(I_n,0_{n\times n})$. Then $\ker A = \spn\{\e_{n+1},\ldots,\e_{2n}\}$. One can directly verify that $x^TJ_{2n}y = 0$ for all $x, y\in \ker A$, which means that $\ker A$ is an isotropic subspace \cite[Section 1.2]{deGo06}. In fact, all spsd matrices whose kernel is an isotropic subspace do not have  Williamson's diagonal form.%belong to this category. %If we represent $S$ by its columns as $S = [s_1,\ldots,s_{2n}]$, we can easily see that $W = \spn{s_1,\s_m,s_{n+1},\ldots,s_{m+n}\}$   
	
	The observation that owning a symplectic kernel is a necessary and sufficient condition for the existence of Williamson's diagonal form of an spsd matrix was previously made in \cite[Remark 2.6]{JainM22}. A justification for this observation can be drawn from \cite[Proposition 5.2]{JainM22} where one of the symplectic projections is uniquely determined by the columns of $W$. Our proof here is however more constructive.

	For the sake of convenience, in the rest of this paper, we restrict ourselves to the case of spsd matrices that satisfy the symplectic kernel condition mentioned in Proposition~\ref{prop:WillFormSemidefiniteMat}. Certainly, $\{0\}$ is itself a (trivial) symplectic subspace. Therefore, the derived results hold for spd matrices too.
	%\end{enumerate}
%\end{remark}

\section{Trace minimization theorem}\label{sec:Ky-Fan}
The trace minimization theorem %which is also mentioned as Ky-Fan theorem in the standard case, 
for the symplectic eigenvalues of spd matrices was established in \cite{Hiro06,BhatJ15}. To the best of our knowledge, these are the only sources that can be found in the literature. This theorem was exploited in \cite{SonAGS2021} to compute symplectic eigenpairs of spd matrices. A popular way of justifying such a statement is to use mi\-ni\-max principles; see \cite{Sameh82,Nakic03} for standard eigenvalues of Hermitian matrix pencils and \cite{BhatJ15} for symplectic eigenvalues of spd matrices. In \cite{BhatJ15}, the derived mi\-ni\-max principles however invoked the reciprocals of symplectic eigenvalues. This approach is certainly inapplicable to the case of spsd matrices. In this section, we will prove the trace minimization theorem for symplectic eigenvalues in a more general setting where $A$ can be spsd. Note that restricted to the case of spd matrices, our proof differs from the ones presented in \cite{Hiro06,BhatJ15}.  % show that the trace minimization theorem also holds for symplectic eigenvalues of spsd matrices. 
For completeness, we restate the trace minimization theorem in a more general form in which the symplectic eigenvalues can be zero. %The first approach is based on an variational principles derived in \cite{JainM20}. In the second approach, we reformulate the problem as the standard eigenvalue problem of a structured matrix pencil.

%Logically, a Ky-Fan theorem for the symplectic eigenvalues of spsd matrix is so far not justified. Based on the inquality derived in Section~\ref{sec:Cauchy_interlacing}, we can obtain in the folowing. 
\begin{theorem}\label{theo: Ky-Fan_symplectic}
Let $A\in \SPSD(2n)$ have symplectic eigenvalues $0\leq d_1 \leq \cdots \leq d_n$. Then for any $k$ with $1\leq k \leq n$, it holds that
\begin{equation*}
\min_{X\in \Spkn}\tr(X^TAX) = 2\sum_{j=1}^kd_j.
\end{equation*}
\end{theorem}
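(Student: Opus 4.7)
My plan is to establish the equality by proving the two matching inequalities, starting with the upper bound. Using Proposition~\ref{prop:WillFormSemidefiniteMat}, I pick $S = [s_1, \ldots, s_{2n}] \in \Spn$ with $S^T A S = \diag(D,D)$, and I take $X_\ast = [s_1, \ldots, s_k, s_{n+1}, \ldots, s_{n+k}]$. The columns of $X_\ast$ inherit the symplectic normalization from those of $S$, so $X_\ast \in \Spkn$, and a direct block computation gives $X_\ast^T A X_\ast = \diag(d_1, \ldots, d_k, d_1, \ldots, d_k)$, hence $\tr(X_\ast^T A X_\ast) = 2\sum_{j=1}^{k} d_j$. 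This already shows that the minimum is at most $2\sum_{j=1}^{k} d_j$.

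For the lower bound I first reduce to $A = \diag(D,D)$. Given any $X \in \Spkn$, set $Y = S^{-1} X$; since $S^{-1} \in \Spn$ and left multiplication by a symplectic matrix preserves $\Spkn$, we have $Y \in \Spkn$, and $\tr(X^T A X) = \tr(Y^T (S^T A S) Y) = \tr(Y^T \diag(D,D) Y)$. So it suffices to prove $\tr(Y^T \diag(D,D) Y) \ge 2\sum_{j=1}^{k} d_j$ for every $Y \in \Spkn$.

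The core step is a complexification. Writing $Y = [Y_1,\ Y_2]$ with $Y_1, Y_2 \in \R^{2n \times k}$ and $Z = Y_1 + \i Y_2 \in \C^{2n \times k}$, the conditions $Y_1^T J_{2n} Y_1 = Y_2^T J_{2n} Y_2 = 0$ and $Y_1^T J_{2n} Y_2 = I_k$ translate into $Z^\ast (-\i J_{2n}) Z = 2 I_k$, while $\tr(Z^\ast \diag(D,D) Z) = \tr(Y^T \diag(D,D) Y)$ (the imaginary cross terms cancel by the symmetry of $\diag(D,D)$). The Hermitian matrix $-\i J_{2n}$ has eigenvalues $\pm 1$, each of multiplicity $n$; let $U$ be its unitary eigenbasis, so $U^\ast (-\i J_{2n}) U = \diag(I_n, -I_n)$. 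A short calculation, using that $\diag(D,D)$ commutes with $J_{2n}$, shows $U^\ast \diag(D,D) U = \diag(D,D)$. Setting $\tilde Z = U^\ast Z$ and splitting it as $\tilde Z = \begin{bmatrix} \tilde Z_+ \\ \tilde Z_- \end{bmatrix}$ with $\tilde Z_\pm \in \C^{n \times k}$, the problem becomes: minimize $\tr(\tilde Z_+^\ast D \tilde Z_+) + \tr(\tilde Z_-^\ast D \tilde Z_-)$ subject to $\tilde Z_+^\ast \tilde Z_+ - \tilde Z_-^\ast \tilde Z_- = 2 I_k$.

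The conclusion follows from a rearrangement/Ky~Fan type argument. Let $\mu_1 \geq \cdots \geq \mu_k \geq 0$ be the eigenvalues of $\tilde Z_-^\ast \tilde Z_-$; by the constraint, those of $\tilde Z_+^\ast \tilde Z_+$ are $\mu_j + 2$. For any $W \in \C^{n \times k}$ with $W^\ast W = I_k$, the Hardy--Littlewood--P\'olya pairing yields $\sum_j \nu_j (W^\ast D W)_{jj} \ge \sum_{j=1}^k d_j \nu_j$ for $\nu_1 \ge \cdots \ge \nu_k \ge 0$ and $0 \le d_1 \le \cdots \le d_n$. Applying this to the SVDs of $\tilde Z_+$ and $\tilde Z_-$ separately gives
\[
\tr(X^T A X) \;\ge\; \sum_{j=1}^k d_j (\mu_j + 2) + \sum_{j=1}^k d_j \mu_j \;=\; 2 \sum_{j=1}^k d_j (\mu_j + 1) \;\ge\; 2 \sum_{j=1}^k d_j,
\]
the last step using $\mu_j \ge 0$ and $d_j \ge 0$. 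The main obstacle to overcome is finding a formulation that does not invoke reciprocals of the $d_j$'s, as in the prior arguments of \cite{Hiro06,BhatJ15}; the complex diagonalization of $-\i J_{2n}$, which is insensitive to the vanishing of any $d_j$, is precisely what makes the proof go through uniformly in the spsd setting.
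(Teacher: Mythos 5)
Your argument is correct, but it follows a genuinely different route from the paper. The paper embeds the problem into the $4n\times 4n$ positive-semidefinite matrix pencil $\calA-\lambda\calJ$ of \eqref{Eq:MatPen}, maps each $S\in\Spkn$ to a structured $\calX\in\R^{4n\times 4k}$ satisfying $\calX^T\calJ\calX=\diag(I_{2k},-I_{2k})$, and then invokes the Kova\v{c}-Striko--Veseli\'c trace minimization theorem for such pencils to get the lower bound; you instead complexify the constraint ($Z=Y_1+\i Y_2$, $Z^\ast(-\i J_{2n})Z=2I_k$), diagonalize the Hermitian involution $-\i J_{2n}$, and finish with a Ky Fan/rearrangement bound for compressions of $D$. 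Both proofs avoid the reciprocals of the $d_j$ and hence survive $d_j=0$; yours is self-contained modulo standard Hermitian eigenvalue inequalities, while the paper's outsources the analytic core to \cite{KovaV95} at the price of the pencil machinery. Two small points you should tighten: (i) the identity $U^\ast\diag(D,D)U=\diag(D,D)$ does not follow from commutativity alone for an arbitrary eigenbasis $U$ of $-\i J_{2n}$ --- a general $U$ only gives $\diag(V_+^\ast DV_+,V_-^\ast DV_-)$ with $V_\pm$ unitary, which still suffices for your eigenvalue-based bound, but you should either say that or fix the explicit choice $U=\tfrac{1}{\sqrt2}\left[\begin{smallmatrix}I_n&I_n\\ \i I_n&-\i I_n\end{smallmatrix}\right]$; (ii) the inequality $\sum_j\nu_j(W^\ast DW)_{jj}\ge\sum_j d_j\nu_j$ is not pure Hardy--Littlewood--P\'olya --- it needs the Ky Fan/Cauchy interlacing fact $\tr(W_i^\ast DW_i)\ge\sum_{j\le i}d_j$ for the leading $i$ columns $W_i$ of $W$, combined with Abel summation over the decreasing weights $\nu_j$ (or, equivalently, von Neumann's trace inequality plus interlacing). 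Neither issue is a gap in substance, only in the stated justification.
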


For a proof of this result, we need some preparation. We start with reformulating %\marginpar{here should be ``a''.}  
the definition of symplectic eigenvalues and eigenvectors. Recalling \eqref{Eq:Def_symplEigenVec}, 
$[u,v]\in \R^{2n\times 2}$ is a symplectic eigenvector pair corresponding to the symplectic eigenvalue $d$ of $A$ if and only if one of the following four equivalent conditions holds:
\begin{align}
\begin{split}
&\begin{bmatrix}
A&0\\0&A
\end{bmatrix} 
\begin{bmatrix}
u\\v
\end{bmatrix} 
= d 
\begin{bmatrix}
\ \ 0& J_{2n}\\-J_{2n}&0
\end{bmatrix}
\begin{bmatrix}
u\\v
\end{bmatrix},\\
&\begin{bmatrix}
	A&0\\0&A
\end{bmatrix} 
\begin{bmatrix}
	v\\u
\end{bmatrix} 
= -d 
\begin{bmatrix}
\ \	0&J_{2n}\\-J_{2n}&0
\end{bmatrix}
\begin{bmatrix}
	v\\u
\end{bmatrix},\\
&\begin{bmatrix}
A&0\\0&A
\end{bmatrix} 
\begin{bmatrix}
\ \ v\\-u
\end{bmatrix} 
= d 
\begin{bmatrix}
\ \ 0&J_{2n}\\-J_{2n}&0
\end{bmatrix}
\begin{bmatrix}
\ \ v\\-u
\end{bmatrix},\\
&\begin{bmatrix}
A&0\\0&A
\end{bmatrix} 
\begin{bmatrix}
-u\\ \ \ v
\end{bmatrix} 
= -d 
\begin{bmatrix}
\ \ 0&J_{2n}\\-J_{2n}&0
\end{bmatrix}
\begin{bmatrix}
-u\\ \ \ v
\end{bmatrix}.
\end{split}\label{Eq:reform}
\end{align}
This fact suggests constructing the matrix pencil  %\marginpar{I still find ``the'' here is right since $A$ is already introduced.}
\begin{equation}\label{Eq:MatPen}
\calA-\lambda\calJ :=
\begin{bmatrix}
A&0\\0&A
\end{bmatrix}
-\lambda\begin{bmatrix}
\ \ 0&J_{2n}\\-J_{2n}&0
\end{bmatrix}
\end{equation}
with the following properties.

\begin{lemma}\label{lem:PencilProperties}
Let $A\in \SPSD(2n)$ and $\calA -\lambda\calJ$ be the matrix pencil as in \eqref{Eq:MatPen}. Then the following statements hold:
\begin{enumerate}
 \item The real number $d$ is a symplectic eigenvalue of $A$ if and only if $d$ and $-d$ are eigenvalues of multiplicities two of the matrix pencil $\calA -\lambda\calJ$.
\item The matrix $\calA$ is spsd and the matrix $\calJ$ is symmetric and nonsingular.
 %\item The matrix pencil $(\calA,\calJ)$ is positive-semidefinite.% in the sense of Definition in \cite{KovaV95}.
 \item The matrix $\calJ$ possesses only $1$ and $-1$ as its eigenvalues. Moreover, both of them are of multiplicity $2n$. 
 %\item $[u;v]$ and $[v;-u]$ are eigenvectors of $(\calA,\calJ)$ associated with the eigenvalue $d$; $[-u;v]$ and $[v;u]$ are eigenvectors of $(\calA,\calJ)$ associated with the eigenvalue $-d$. 
 %\item The matrix pencil $\calA -\lambda\calJ$ is diagonalizable.
\end{enumerate}
\end{lemma}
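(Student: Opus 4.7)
My plan is to dispatch items (ii) and (iii) by a direct block calculation and then attack item (i) via Williamson's diagonal form.

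For item (ii), $\calA$ is spsd because it is block-diagonal with the spsd block $A$ repeated, the symmetry of $\calJ$ is immediate from $J_{2n}^T=-J_{2n}$, and nonsingularity follows from the computation used in item (iii). For item (iii), I would compute $\calJ^2=\mathrm{diag}(-J_{2n}^2,-J_{2n}^2)=I_{4n}$, so the minimal polynomial of $\calJ$ divides $\lambda^2-1$ and the spectrum of $\calJ$ is contained in $\{-1,+1\}$. Since $\calJ$ is symmetric by item (ii), it is diagonalizable, hence both $+1$ and $-1$ are actually attained; the identity $\tr(\calJ)=0$ then forces their multiplicities to coincide and equal $2n$.

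For item (i), my approach is congruence followed by decoupling. Pick a symplectic $S$ realizing Williamson's diagonal form of $A$ via Proposition~\ref{prop:WillFormSemidefiniteMat} and set $\mathcal{S}=\mathrm{diag}(S,S)\in\R^{4n\times 4n}$. A short block computation using $S^TJ_{2n}S=J_{2n}$ shows $\mathcal{S}^T\calJ\mathcal{S}=\calJ$, while $\mathcal{S}^T\calA\mathcal{S}=\mathrm{diag}(D,D,D,D)$. A permutation that groups the $j$-th coordinate of each of the four diagonal blocks together then brings the transformed pencil into block-diagonal form $\bigoplus_{j=1}^{n}(d_j I_4-\lambda\calJ_0)$, where $\calJ_0$ is a fixed $4\times 4$ symmetric involution independent of $j$ (the restriction of $\calJ$ to the selected four-dimensional coordinate subspace), whose eigenvalues are $+1$ and $-1$ each of multiplicity two. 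Each such block contributes generalized eigenvalues $\pm d_j$, each of algebraic multiplicity two, giving the claimed bijection in both directions. For the \textsl{only if} implication one may alternatively exhibit directly the four generalized eigenvectors suggested by the four reformulations in \eqref{Eq:reform} and verify their linear independence from the symplectic normalization of $[u,v]$.

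I expect the main obstacle to be the bookkeeping at the zero symplectic eigenvalue: when $d_j=0$, the pair $(d_j,-d_j)$ collapses and the corresponding $4\times 4$ block degenerates to $-\lambda\calJ_0$, contributing $\lambda=0$ with total algebraic multiplicity four. One must be explicit that this is consistent with the phrasing that both $d=0$ and $-d=0$ have multiplicity two simultaneously, which is the only way the count aggregates to the $4n$ total pencil eigenvalues across all $n$ symplectic eigenvalues of $A$.
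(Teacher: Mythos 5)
Your proof is correct, but it takes a genuinely different route from the paper's on two of the three items. For item~(iii) the paper simply exhibits two explicit $4n\times 2n$ matrices of eigenvectors of $\calJ$ for the eigenvalues $1$ and $-1$ (built from arbitrary bases $X_1,X_2$ of $\R^n$), whereas you argue structurally from $\calJ^2=I_{4n}$, the symmetry of $\calJ$, and $\tr(\calJ)=0$; your version is shorter and avoids any explicit construction, and the trace argument correctly pins the multiplicities at $2n$ each. For item~(i) the paper's proof is the one you relegate to an ``alternative'': it reads the four generalized eigenvectors $(u;v)$, $(v;-u)$ for $d$ and $(v;u)$, $(-u;v)$ for $-d$ directly off the four identities in \eqref{Eq:reform}, which needs nothing beyond the definition of a symplectic eigenpair. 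Your primary route --- congruence of the pencil by $\diag(S,S)$ using a Williamson diagonalizer $S$ from Proposition~\ref{prop:WillFormSemidefiniteMat}, followed by a permutation decoupling it into $n$ blocks $d_jI_4-\lambda\calJ_0$ --- is also valid (congruence by an invertible matrix preserves the generalized eigenvalues of the regular pencil since $\calJ$ is nonsingular), and it buys more: a complete multiset accounting of the $4n$ pencil eigenvalues as $\{\pm d_j,\ \text{each twice}\}$, which is essentially the fact invoked later in the proof of Theorem~\ref{theo: Ky-Fan_symplectic}. The price is that it leans on the existence of Williamson's diagonal form, i.e., on the standing assumption that $\ker A$ is symplectic, while the paper's eigenvector argument for item~(i) does not; under the paper's declared restriction to such matrices this is harmless, but it is worth flagging the dependence. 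Your remark about the degenerate case $d_j=0$ (the block contributing $\lambda=0$ with total multiplicity four, consistent with ``$d$ and $-d$ each of multiplicity two'') is a point the paper glosses over entirely.
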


\begin{proof}
The first statement follows immediately from \eqref{Eq:reform}  while the second one is straightforward. To justify the last one, let $X_1$ and $X_2$ be any two basis matrices of $\R^n$. Then it is direct to show that the matrices
\begin{equation*}
\begin{bmatrix}
X_1&\ \ 0\\ 0&\ \ X_2\\ 0&-X_2\\X_1&\ \ 0
\end{bmatrix},\quad
\begin{bmatrix}
\ \ X_1&0\\ \ \ 0&X_2\\ \ \ 0&X_2\\ -X_1&0
\end{bmatrix}
\end{equation*}
consist of eigenvectors of $\calJ$ associated with eigenvalues $1$ and $-1$, respectively.
%\item This is the observation \eqref{Eq:reform}.
%\item  Let $S = [S_u\ S_v] \in \Sp(2n)$ symplectically diagonalizes $A$ The matrix pencil $\calA -\lambda\calJ$ is diagonalized by the matrix $$[S_u\ S_v\ -S_u\ S_v;S_v\ S_u\ S_v\ -S_u].$$
%\end{enumerate} 
%Moreover, all the statements on eigenvectors in (iii) are verified in \eqref{Eq:reform}. The fourth and the fifth statements are deduced from the third one, taking into account the existence of Williamson's diagonal form for the spsd matrix $A$ and the construction of $(\calA,\calJ)$.
\end{proof}

The relations in \eqref{Eq:reform} together with Lemma~\ref{lem:PencilProperties} establish a connection between  symplectic eigenpairs of $A$ and eigenpairs of the matrix pencil \eqref{Eq:MatPen}. Hence, the result on trace minimization of positive-semidefinite matrix pencils derived in \cite{KovaV95} is here helpful. Let us first collect the necessary facts from there. Given two $\ell \times \ell$ symmetric real matrices $A$ and $B$. 
The matrix pencil $A-\lambda B$ is said to be \emph{positive-semidefinite} if there is a number $\lambda_0 \in \R$ such that $A-\lambda_0 B$ is positive-semidefinite. Such constant $\lambda_0$ is called a \emph{definitizing shift}. Obviously, the matrix pencil $\calA-\lambda\calJ$ in \eqref{Eq:MatPen} is positive-semidefinite with a shift $\lambda_0 = 0$.

Furthermore, it was shown in %we recall an important result on trace minimization of positive-semidefinite matrix pencils. It was shown in 
\cite[Proposition 4.1]{KovaV95} that the eigenvalues of the positive-semidefinite matrix pencil $A-\lambda B$ with nonsingluar $B$ are real and the number of non-positive (resp. non-negative) eigenvalues of $A-\lambda B$ is the same as the number of negative (resp. positive) eigenvalues of $B$. In other words, if $q$ and $p$ with $q+p=\ell$ is the number of negative and positive eigenvalues of $B$, respectively, then there are $q$ non-positive eigenvalues of $A-\lambda B$, namely, $\alpha_q^- \leq \cdots \leq \alpha_1^- \leq 0$, and $p$ non-negative eigenvalues of $A-\lambda B$, namely, $0\leq \alpha_1^+\leq \cdots \leq \alpha_p^+$. Thus, they can overall be arranged as
$$
\alpha_q^- \leq \cdots \leq \alpha_1^- \leq \alpha_1^+ \leq \cdots \leq \alpha_p^+.
$$ 
The following theorem provides an important result on trace minimization for positive-semidefinite matrix pencils.
\begin{theorem}\cite[Theorem 3.1]{KovaV95}\label{Theo:KovacVeselic}
Let $A,\, B \in \R^{\ell\times \ell}$ be symmetric and let $B$ be nonsingular with $p$ positive and $q$ negative eigenvalues. Assume moreover that the matrix pencil $A-\lambda B$ is positive-semidefinite. Let $p_1$ and $q_1$ be two integers such that $0\leq p_1 \leq p$ and $0\leq q_1 \leq q$. Then, 
the function
\begin{equation}\label{Eq:KovacVeselicfunc}
\R^{\ell\times (p_1+q_1)}\ni\ X\mapsto \tr (X^TAX) \in \R,
\end{equation}
subjected to the condition
\begin{equation}\label{Eq:KovacVeseliccondition}
X^TBX = \diag(I_{p_1},-I_{q_1}),
\end{equation}
is bounded from below by
\begin{equation}\label{Eq:KovacVeseliclowerbound}
\sum_{j=1}^{p_1}\alpha_j^+ - \sum_{j=1}^{q_1}\alpha_j^-.
\end{equation}
If, additionally, there exists a matrix $X_0\, \in \R^{\ell\times (p_1+q_1)}$ satisfying \eqref{Eq:KovacVeseliccondition} and consisting of eigenvectors of the matrix  pencil $A-\lambda B$ associated with the eigenvalues $\alpha_1^+,\ldots,\alpha_{p_1}^+,\alpha_1^-,\ldots,\alpha_{q_1}^-$, then the lower bound \eqref{Eq:KovacVeseliclowerbound} becomes the minimal value of the function \eqref{Eq:KovacVeselicfunc} which is reached at~$X_0$.
\end{theorem}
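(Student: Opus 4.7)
The plan is a three-stage strategy: simultaneous diagonalization of the pencil, Lagrange-multiplier analysis to characterize stationary points, and a definitizing-shift coercivity argument to ensure the infimum is attained.

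First, because the pencil $A-\lambda B$ is positive-semidefinite and $B$ is nonsingular, a classical simultaneous diagonalization for definite matrix pencils produces a nonsingular $Z$ with $Z^T B Z = J := \diag(I_p,-I_q)$ and $Z^T A Z = \Lambda$ diagonal with nonnegative entries, ordered so that the eigenvalues of the reduced pencil $\Lambda - \mu J$ equal the prescribed $\alpha_j^+$ (arising from the first $p$ diagonal positions) and $\alpha_j^-$ (from the last $q$ positions). The substitution $X = ZY$ reduces the problem to minimizing $\tr(Y^T \Lambda Y)$ subject to $Y^T J Y = \hat{J} := \diag(I_{p_1},-I_{q_1})$.

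Second, applying Lagrange multipliers, a stationary point satisfies $\Lambda Y = J Y M$ for a symmetric $M \in \R^{(p_1+q_1)\times(p_1+q_1)}$. Symmetry of $Y^T \Lambda Y = \hat{J} M$ forces $\hat{J} M = M \hat{J}$, so $M = \diag(M_1, M_2)$ is block-diagonal. An orthogonal block rotation $V = \diag(V_1, V_2)$ that diagonalizes each $M_i$ preserves $\hat{J}$, and the rotated $Y' = YV$ satisfies $\Lambda Y' = J Y' D$ with $D$ diagonal. Each column of $Y'$ is therefore a $J$-normalized eigenvector of the reduced pencil: the first $p_1$ columns (with $(y'_k)^T J y'_k = +1$) pair with eigenvalues $\mu_k^+ \in \{\alpha_j^+\}$, and the last $q_1$ columns pair with eigenvalues $\mu_k^- \in \{\alpha_j^-\}$. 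Hence
\[
\tr(Y^T \Lambda Y) = \tr(\hat{J} M) = \sum_{k=1}^{p_1} \mu_k^+ - \sum_{k=1}^{q_1} \mu_k^-.
\]

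Third, to ensure the infimum is attained, pick a definitizing shift $\lambda_0 \in (\alpha_1^-, \alpha_1^+)$ making $\Lambda - \lambda_0 J \succ 0$ with smallest eigenvalue $c > 0$. Then
\[
\tr(Y^T \Lambda Y) = \tr\bigl(Y^T (\Lambda - \lambda_0 J) Y\bigr) + \lambda_0(p_1 - q_1) \ge c \|Y\|_F^2 + \lambda_0(p_1 - q_1),
\]
which is coercive on the feasible set, so the infimum is achieved at a stationary point. Minimizing the critical-value expression over the admissible eigenvalue choices selects the $p_1$ smallest $\alpha_j^+$ and the $q_1$ largest $\alpha_j^-$, yielding the bound $\sum_{j=1}^{p_1} \alpha_j^+ - \sum_{j=1}^{q_1} \alpha_j^-$. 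The equality case follows by direct substitution: if $X_0$ consists of $B$-orthonormal pencil eigenvectors with the prescribed eigenvalues, then $X_0^T A X_0 = X_0^T B X_0 \cdot \diag(\alpha_1^+,\ldots,\alpha_{p_1}^+,\alpha_1^-,\ldots,\alpha_{q_1}^-)$, and taking the trace produces exactly the lower bound.

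The main obstacle I anticipate is the degenerate case $\alpha_1^+ = \alpha_1^- = 0$, where the strict definitizing-shift interval collapses and the coercivity argument fails. My plan is to address this via a perturbation limit: replace $A$ by $A + \varepsilon I$, apply the nondegenerate result, and pass to the limit $\varepsilon \to 0^+$, using continuous dependence of the pencil eigenvalues on $\varepsilon$. An alternative route that avoids coercivity entirely is to prove an indefinite Poincar\'e separation / Cauchy-interlacing inequality for positive-semidefinite pencils, but the Lagrange-multiplier approach has the advantage of also identifying the minimizer explicitly, which is needed for the equality statement.
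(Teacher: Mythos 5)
First, a point of reference: the paper you were given does not prove this theorem at all --- it is imported verbatim from \cite[Theorem 3.1]{KovaV95} and used as a black box --- so your attempt can only be measured against the original proof of Kova\v{c}-Striko and Veseli\'{c}, which works directly with semidefinite pencils. Measured that way, your proposal has a genuine gap at its foundation: Stage 1 is false. A positive-semidefinite pencil with nonsingular $B$ need \emph{not} be simultaneously diagonalizable by congruence. Take
$A = \begin{bmatrix} 1 & 0 \\ 0 & 0 \end{bmatrix}$ and
$B = \begin{bmatrix} 0 & 1 \\ 1 & 0 \end{bmatrix}$.
Here $A \succeq 0$, so $\lambda_0 = 0$ is a definitizing shift, and $B$ has inertia $(1,1)$; but $\det(A-\lambda B) = -\lambda^2$, so the pencil's only eigenvalue is $0$ with multiplicity two, i.e.\ $\alpha_1^+ = \alpha_1^- = 0$. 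If a nonsingular $Z$ gave $Z^TAZ = \Lambda$ and $Z^TBZ = J$ diagonal, the eigenvalues would be $\Lambda_{ii}/J_{ii}$, and all of them being zero would force $A = 0$ --- a contradiction (equivalently, $B^{-1}A$ is a nonzero nilpotent Jordan block). So Stages 1--2 cannot even start in exactly the degenerate case. The same example shows the infimum need not be attained: for $p_1=q_1=1$, the columns $x_1 = (\epsilon, \tfrac{1}{2\epsilon})^T$, $x_2 = (-\epsilon, \tfrac{1}{2\epsilon})^T$ satisfy $X^TBX = \diag(1,-1)$ and $\tr(X^TAX) = 2\epsilon^2 \to 0$, yet no feasible $X$ attains $0$, because every pencil eigenvector is a multiple of $e_2$, which is $B$-isotropic. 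This is precisely why the attainment clause of the theorem is conditional, and it is a sanity check that \emph{any} argument concluding ``the infimum is achieved at a stationary point'' must break down here. In short, you misdiagnose the degenerate case: the difficulty is not merely loss of coercivity but the nonexistence of the diagonal form and of an eigenvector basis --- which is the whole reason the cited theorem is nontrivial.

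The good news is that your own fallback, promoted from a patch to the main reduction, does repair the inequality. Prove the theorem first for \emph{definite} pencils ($A - \lambda_0 B \succ 0$ for some $\lambda_0$), where your Stages 1--3 are sound, modulo two points you wave through: (i) columns of a stationary $Y'$ of the same type carrying the same eigenvalue are $J$-orthonormal, hence the selected eigenvalues form a sub-multiset of $\{\alpha_j^{\pm}\}$ respecting multiplicities --- this is what legitimizes ``pick the $p_1$ smallest $\alpha_j^+$ and $q_1$ largest $\alpha_j^-$''; (ii) the constraint satisfies a constraint qualification, so minimizers really are stationary. Then, for a merely semidefinite pencil and any \emph{fixed} feasible $X$, write $\tr(X^TAX) = \tr\bigl(X^T(A+\varepsilon I)X\bigr) - \varepsilon\|X\|_F^2 \ge \sum_{j=1}^{p_1}\alpha_j^+(\varepsilon) - \sum_{j=1}^{q_1}\alpha_j^-(\varepsilon) - \varepsilon\|X\|_F^2$ and let $\varepsilon \to 0^+$; the labeling consistency $\alpha_j^{\pm}(\varepsilon) \to \alpha_j^{\pm}$ is not automatic and needs root continuity of $\det(A+\varepsilon I - \lambda B)$ combined with the inertia count of \cite[Proposition 4.1]{KovaV95} applied to the limit pencil, but it goes through. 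The equality clause is direct substitution, as you say, and needs no perturbation. With that reorganization your argument becomes a correct, and arguably more elementary, proof than the original; as written, however, it asserts a diagonalization that fails for exactly the pencils the theorem is designed to cover.
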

Now, we are ready to prove the trace minimization Theorem~\ref{theo: Ky-Fan_symplectic} for symplectic eigenvalues of spsd matrices. 

%\noindent\textit{Proof of Theorem \ref{theo: Ky-Fan_symplectic}.\quad}
\noindent\textit{Proof of Theorem \ref{theo: Ky-Fan_symplectic}} 
Consider the matrix pencil $\calA -\lambda\calJ$ as in \eqref{Eq:MatPen}. By Proposition~\ref{prop:WillFormSemidefiniteMat} and Lemma~\ref{lem:PencilProperties}, the eigenvalues of $\calA -\lambda\calJ$, sorted in the nondecreasing order, are
\begin{equation*}%\label{Eq:EigenMatPen}
-d_n = -d_n \leq \cdots \leq -d_{1} = -d_{1} \leq d_{1} = d_{1} \leq \cdots \leq d_n = d_n,
\end{equation*}
where $d_ 1,\ldots,d_n$ are the symplectic eigenvalues of $A$. Thus, an application of Theorem~\ref{Theo:KovacVeselic} to the matrix pencil $\calA -\lambda\calJ$ for $p_1 = q_1=2k$ yields
\begin{equation}\label{eq:proof1}
\tr(\calX^T\calA\calX) \geq 4\sum_{j=1}^kd_j
\end{equation}
for all $\calX\in \R^{4n\times 4k}$ satisfying the constraint
\begin{equation}\label{eq:proof2}
\calX^T\calJ\calX = \diag(I_{2k},-I_{2k}).
\end{equation}

Next, for any $S \in \Rbnk$, we construct %and set $S = [S_u\ S_v]$ and $\tilde{S} = [S_v\ -S_u]$. Constructing 
the matrix
\begin{equation}\label{eq:proof3}
\calX = \frac{1}{\sqrt{2}}\left[\begin{array}{ll}
S&SJ_{2k}^T\\
SJ_{2k}^T&S
\end{array}\right] \in \R^{4n\times 4k}.
\end{equation}
This matrix fulfills
\begin{align}\notag
\tr(\calX^T\calA\calX) &= \tr(S^TAS + J_{2k}^{}S^TASJ_{2k}^T)\\ \notag
&=\tr(S^TAS + S^TASJ_{2k}^TJ^{}_{2k})\\
&=2\, \tr(S^TAS).
\label{eq:trace_equal}
\end{align}
Moreover, taking $S\in \Spkn$, one can verify by a direct calculation that the matrix $\calX$ in \eqref{eq:proof3} satisfies \eqref{eq:proof2}. 
%\begin{lemma}\label{lem:proof4}
%If $S\in \Spkn$, then $\dfrac{1}{\sqrt{2}}\calX$, where $ \calX$ is constructed as in \eqref{eq:proof3}, satisfies \eqref{eq:proof2}.
%\end{lemma}
Then, using \eqref{eq:trace_equal} and \eqref{eq:proof1}, we obtain that
%\begin{lemma}\label{lem:proof5}
%For any $S\in \Spkn$ and $\calX$ determined from $S$ as in \eqref{eq:proof3}, we have that
\begin{align}\label{eq:proof7}
\min_{S \in \Spkn}\tr(S^TAS) 
 &= \frac{1}{2}\min_{\footnotesize{\begin{array}{ll}
&\calX\mbox{ as in \eqref{eq:proof3}},\\ &S\in \Spkn
\end{array}}}\tr(\calX^T\calA\calX)\\ \label{eq:proof8}
% &= \frac{1}{2}\min_{\footnotesize{\begin{array}{ll}
%&\calX\mbox{ as in \eqref{eq:proof3}},\\ &S\in \Spkn
%\end{array}}}\tr\left(\frac{\calX}{\sqrt{2}}^T\calA\frac{\calX}{\sqrt{2}}\right)\\ \label{eq:proof8}
 &\geq
\frac{1}{2}\min_{
\tilde{\calX}^T\calJ\tilde{\calX}= \diag(I_{2k},-I_{2k})}
\tr(\tilde{\calX}^T\calA\tilde{\calX})\\ \notag
 & \geq 2\sum_{j=1}^kd_j, 
\end{align}
in which \eqref{eq:proof8} is due to the fact that the constraint set in the right-hand side of \eqref{eq:proof7} is a subset of that in \eqref{eq:proof8}.

Finally, we choose $S = [u_1,\ldots,u_k,v_1,\ldots,v_k]$ consisting of (symplectically) normalized symplectic eigenvector pairs $[u_j,v_j]$ associated with the symplectic eigenvalue $d_j$ of $A$ for $j=1,\ldots,k$. In this case, we have $\tr (S^TAS) = 2\sum_{j=1}^kd_j$
which completes the proof. \hfill $\square$
%$\end{proof}
%\hfill $\blacksquare$

%In addition to our proof, we believe that the proof in \cite{Hiro06} is naturally extendable to the case of spsd matrices. 
\begin{remark}
For the computation of the smallest symplectic eigenvalues of the spsd matrix $A$, a direct use of the symplectic Lanczos procedure \cite{Amod06} will not work because it involves the inverse of $A$ which does not exist. %This fact can be circumvented by shifting the matrix by a positive shift. 
On the other side, as the theoretical results in \cite{SonAGS2021} still hold for spsd matrices, Theorem~\ref{theo: Ky-Fan_symplectic} plays a key role in enabling the use of the numerical method proposed there for computing the required symplectic eigenpairs.
\end{remark}

\bibliographystyle{siamplain}

%\bibliography{references}
\end{document}